\documentclass[preprint,12pt]{elsarticle}
\usepackage{amssymb,amsmath,amsfonts,amsthm,mathtools,epsfig}
\newtheorem{thm}{Theorem}
\newtheorem{theorem}[thm]{Theorem}
\newtheorem{lemma}[thm]{Lemma}

\journal{Discrete Mathematics}

\begin{document}

\begin{frontmatter}

\title{Analogues of Bermond-Bollob\' as Conjecture for Cages Yield Expander Families}


\author{Leonard Chidiebere Eze and Robert Jajcay}

\affiliation{organization={Department of Algebra and Geometry, Faculty of Mathematics, Physics and Informatics, Comenius University},
            addressline={Mlynska Dolina}, 
            postcode={84248}, 
            city={Bratislava},
            country={Slovakia}}



\begin{abstract}
This paper presents a possible link between Cages and Expander Graphs by introducing three interconnected variants of the Bermond and Bollobás Conjecture, originally formulated in 1981 within the context of the Degree/Diameter Problem. We adapt these conjectures to cages, with the most robust variant posed as follows: 
    \textit{Does there exist a constant $c$ such that for every pair of parameters $k,g$ there exists a $k$-regular graph of girth $g$ and order not exceeding $ M(k,g) + c $?}; where $M(k,g)$ denotes the value of the so-called Moore bound for cages.
    We show that a positive answer to any of the three variants of the Bermond and Bollob\' as Conjecture for cages considered in our paper would yield expander graphs (expander families); thereby establishing a connection between Cages and Expander Graphs.
\end{abstract}

\begin{keyword} Cages \sep Moore bound \sep Expander graphs \sep Multipoles \sep Girth \sep $(k,g)$-graphs

\MSC 05C35 \sep 05C40 \sep 05C48

\end{keyword}

\end{frontmatter}

\section{Introduction}
Cages and expander graphs are two seemingly distinct classes of graphs that have been the focus of research in theoretical computer science and mathematics for the past five decades (see, e.g., the surveys \cite{ExooR1, Linial}). Nevertheless, both classes share common use in building robust, high-performance communication networks, and play a crucial role in a host of other applications, from complexity theory to coding theory.

The {\em Cage Problem} is the problem of finding {\em $(k,g)$-cages}, which are the smallest $k$-regular graphs of girth $g$, for each $k \geq 3 $ and $g \geq 3$.
The problem has been widely studied since the pioneering works of Erd\H{o}s and Sachs \cite{Erds} and of Hoffman and Singleton \cite{Hoffman} in the 1960's. It is well-known that for any given pair of integers $k\geq 3$ and $ g\geq 3$, there exist infinitely many $(k,g)$-graphs ($k$-regular graphs of girth $g$) \cite{Erds}, but the problem of determining {\em the orders of the smallest $(k,g)$-regular graphs}, denoted by $n(k,g)$, is wide open for the majority of parameter pairs $(k,g)$ \cite{ExooR1}. 

The second problem considered in our paper, the {\em Expander Graph Problem},  asks for constructions of infinite families of finite regular graphs that are sparse and highly connected (with the precise definition involving either spectral or combinatorial properties of the graphs). Determination of cages (or even devising constructions of small $(k,g)$-graphs) and constructions of expander graphs are challenging combinatorial and computational problems. Although many individual graphs, such as complete graphs and some $(k,g)$-cages, are known to have good expansion properties, but constructing explicit infinite families of expander graphs has proven to be a challenge. In this paper, we aim to construct an expander family from the perspective of the Cage Problem by extending the Bermond-Bollob\'as Conjecture to cages and assuming positive answers to any of the three variants presented herein.

In trying to construct expander families, various families of graphs have been studied, including extremal graphs of given maximal degree and diameter \cite{Filipovski} and regular graphs with fixed second largest eigenvalue \cite{Yang}.
   The Cage Problem is also related to the well-known problem in
   Extremal Graph Theory -- the {\em Degree/Diameter Problem} -- in which one asks for maximal orders of graphs of given maximum degree and diameter \cite{siran}. A quick comparison of the Cage and Degree/Diameter Problems shows that both problems are related through the so-called Moore bound (see definition below; note that the original Moore bound was stated
only for the case of odd girth). For the Degree/Diameter Problem, we denote {\em the order of the largest graphs of maximum degree $\Delta$ and diameter $D$}
by $n(\Delta,D)$. This
order is bounded from above by the Moore bound $M(\Delta,D)$ expressed as

$$M(\Delta,D) = 1 + \Delta + \Delta(\Delta-1) + \dots + \Delta(\Delta -1)^{D-1}.$$

Any $\Delta$-regular graph that achieves this bound must be of girth $ 2D + 1$. Therefore, the Moore bound $M(\Delta,D)$ is a lower bound on the order of $(k,g)$-graphs with $k=\Delta$ and $g=2D+1$. Consequently, for $ k=\Delta$ and $g=2D+1$, $M(\Delta,D) = M(k,g) \leq n(k,g)$, for all $k \geq 3$ and $g \geq 3 $, where $M(k,g)$ denotes {\em the Moore bound for the order of a $k$-regular graph of girth
$g$}. Considering both the odd and even girths $g$, the order $|V(\Gamma)|$ of any $(k,g)$-graph satisfies the following inequalities:
\vskip -5mm

$$
|V(\Gamma)| \geq n(k,g)\ge M(k,g) = $$
$$ \left\{
\begin{array}{ll}
	1+\sum_{i=0}^{(g-3)/2}k(k-1)^{i}=\dfrac {k(k-1)^{(g-1)/2}-2}{k-2}, & \mbox{$g$ odd,}\\
	2\sum_{i=0}^{(g-2)/2}(k-1)^{i}=\dfrac {2(k-1)^{g/2}-2}{k-2}, & \mbox{ $g$ even. }
\end{array} \right.
$$

\noindent Graphs that achieve the Moore bound are called {\em Moore graphs}. Clearly, all Moore graphs are cages. If the order of a $k$-regular graph $\Gamma$ of girth $g$ exceeds the corresponding Moore bound, we call the difference between its order and the Moore bound its {\em excess $e(\Gamma)$}.

In 1981, Bermond and Bollob\'as \cite{Bollobas} raised the following question: 
\vskip -2mm

\begin{center}
\textit{Is it true that for each integer $c>0$ there exist $\Delta>2$ and $D\geq2$ such that the order of the largest graph of maximum degree $\Delta$ and diameter $D$ is at most $M(\Delta, D)-c$?} 
\end{center}
\vskip -2mm

More than forty years later, we still do not know the answer to this question. 
While studying this question (possibly trying to prove a positive answer), 
Jajcay and Filipovski \cite{Filipovski} linked the possibility of a negative answer 
to the existence of a class of expander graphs called Ramanujan graphs by showing that a negative answer
to the question of Bermond and Bollob\'as 
would yield for any fixed $\Delta$ and all sufficiently large even $D$ a family of expander graphs called Ramanujan graphs defined in the next paragraph via the use of spectral properties of graphs.

Let $\Gamma$ be a $k$-regular graph of order $n$. It is known that eigenvalues of the adjacency matrix of $\Gamma$ are of the form $\lambda_{0} \geq \lambda_{1} \geq \lambda_{2} \geq \dots \geq \lambda_{n-1}$ with $\lambda_{0}=k$ and $ |\lambda_{i}| \leq k$ for $i=1, 2, \dots, n-1$. If the absolute value of the second largest eigenvalue of $\Gamma$ is $\lambda $ (i.e., $\lambda = \max \{|\lambda_{1}|,|\lambda_{n-1}|\} $), then we say that $\Gamma$ has a {\em $\lambda$-expansion}, and we refer to $\Gamma$ as an {\em $(n, k, \lambda)$-graph}. 
An $(n, k, \lambda)$-graph $\Gamma$ is called a {\em Ramanujan graph} if $\lambda \leq 2 \sqrt{k-1}$ \cite{Alon, Lubotzky}. It is well known that Ramanujan graphs are {\em expander graphs} \cite{Lubotzky}. 
We also know that it is not difficult to construct small Ramanujan graphs. For example, complete graphs $K_{k+1}$ have been shown to be Ramanujan graphs. However, the main challenge is to determine whether there exist infinite families of $k$-regular Ramanujan graphs for all degrees $k$. This question has been partly answered for every prime $k-1$. In 1988, Lubotzky, Phillips, and Sarnak \cite{Lubotzky} proved that if $k-1$ is a prime, there are infinite families of $k$-regular Ramanujan graphs. In a later development, Friedman \cite{Friedman} showed that for every $ \epsilon > 0 $ almost all $k$-regular graphs satisfy $\lambda(\Gamma) \leq 2\sqrt{k-1} +\epsilon$. In 2013, Marcus, Spielman, and Srivastava \cite{Marcus} showed that bipartite Ramanujan expanders exist for all degrees $ k \geq 3 $. 

Another approach to determining expander graphs is by applying the concept of the {\em Cheeger constant}. The Cheeger constant of a graph measures the connectivity of the graph, and we describe it as follows. Let $\Gamma$ be a finite graph, and $\emptyset \neq S \subset V(\Gamma)$. Define $\sigma(S) = \{(u,v) \in E(\Gamma): u \in S , v \in V(\Gamma)\backslash S\}$. Then,  the \textit{Cheeger constant $h(\Gamma)$} of the graph $\Gamma$ is defined as
\begin{equation}
	h(\Gamma) = \min \left\{ \dfrac{|\sigma(S)|}{|S|}: S\subset V,~~ 0 < |S| \leq \frac{|V|}{2} \right\}. 	
\end{equation}

\noindent
Interestingly, the second largest eigenvalue and the Cheeger constant of a graph are related by the following {\em Cheeger inequality}.
		
\begin{theorem}[\cite{Alon}]
		Let $\Gamma$ be a $k$-regular connected graph, then
			\begin{equation} \label{cheeger}
			 \frac{k - \lambda}{2} \leq h(\Gamma) \leq \sqrt{2k(k-\lambda)},
			 \end{equation}
			where $\lambda$ is the second largest eigenvalue of $\Gamma$.
\end{theorem}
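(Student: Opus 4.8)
The plan is to pass from the adjacency matrix $A$ of $\Gamma$ to the Laplacian $L=kI-A$, whose eigenvalues are $\mu_i=k-\lambda_i$; in particular the spectral gap attached to the second largest eigenvalue $\lambda=\lambda_1$ is $\mu_1=k-\lambda$. For any $f\colon V\to\mathbb{R}$ one has the identity $\langle f,Lf\rangle=\sum_{(u,v)\in E}(f(u)-f(v))^2$, and since $\Gamma$ is connected the eigenvalue $\mu_0=0$ is simple with eigenvector $\mathbf{1}$, so the Courant--Fischer principle gives
\begin{equation*}
 k-\lambda=\mu_1=\min_{\substack{f\perp\mathbf{1}\\ f\neq 0}}\frac{\sum_{(u,v)\in E}(f(u)-f(v))^2}{\sum_{v}f(v)^2}.
\end{equation*}
I would prove the two inequalities separately, the left one being elementary and the right one requiring a rounding argument.

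For the lower bound $\tfrac{k-\lambda}{2}\le h(\Gamma)$, I would take a set $S$ with $0<|S|\le n/2$ realizing the minimum in the definition of $h(\Gamma)$ and feed the centred indicator $f=\mathbf{1}_S-\tfrac{|S|}{n}\mathbf{1}$ into the Rayleigh quotient above. Here $f\perp\mathbf{1}$, the numerator equals $|\sigma(S)|$ because the constant shift cancels in differences and $(\mathbf{1}_S(u)-\mathbf{1}_S(v))^2$ is $1$ exactly on the cut edges, while a short computation gives $\sum_v f(v)^2=\tfrac{|S|(n-|S|)}{n}$. Hence $\mu_1\le \tfrac{n\,|\sigma(S)|}{|S|(n-|S|)}$, and using $n-|S|\ge n/2$ this is at most $2\,|\sigma(S)|/|S|=2h(\Gamma)$, which rearranges to the claim.

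The harder upper bound $h(\Gamma)\le\sqrt{2k(k-\lambda)}$ is the real obstacle, and I would obtain it by extracting a good cut from an eigenvector $f$ satisfying $Lf=\mu_1 f$. Replacing $f$ by $-f$ if necessary, I may assume that its positive support $\{v:f(v)>0\}$ has at most $n/2$ vertices, and I set $g=\max(f,0)$. The first step is to show $R(g)\le\mu_1$, where $R(g)=\sum_{(u,v)\in E}(g(u)-g(v))^2/\sum_v g(v)^2$: since $g(v)f(v)=g(v)^2$ at every vertex one has $\langle g,Lf\rangle=\mu_1\langle g,g\rangle$, while an edge-by-edge check on the sign pattern of $f$ gives $(g(u)-g(v))(f(u)-f(v))\ge(g(u)-g(v))^2$, so that $\langle g,Lg\rangle\le\langle g,Lf\rangle$. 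The second step is the Cheeger rounding: order the vertices by decreasing $g$-value as $v_1,\dots,v_n$ and consider the sweep sets $S_i=\{v_1,\dots,v_i\}$, only those with $|S_i|\le n/2$ being relevant since $g$ is supported on at most $n/2$ vertices. The decisive estimate is
\begin{equation*}
 \sum_{(u,v)\in E}\bigl|g(u)^2-g(v)^2\bigr|\le \sqrt{\sum_{(u,v)\in E}(g(u)-g(v))^2}\;\sqrt{\sum_{(u,v)\in E}(g(u)+g(v))^2}\le \sqrt{2k\,R(g)}\;\sum_{v}g(v)^2,
\end{equation*}
where the first inequality is Cauchy--Schwarz and the second uses $(g(u)+g(v))^2\le 2(g(u)^2+g(v)^2)$ together with the regularity identity $\sum_{(u,v)\in E}(g(u)^2+g(v)^2)=k\sum_v g(v)^2$. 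Telescoping both the left-hand sum and $\sum_v g(v)^2$ over the level differences $g(v_i)^2-g(v_{i+1})^2$ converts the left-hand ratio into a weighted average of the quantities $|\sigma(S_i)|/|S_i|$, which is therefore at least $\min_i|\sigma(S_i)|/|S_i|\ge h(\Gamma)$; combining the two steps yields $h(\Gamma)\le\sqrt{2k\,R(g)}\le\sqrt{2k\,\mu_1}=\sqrt{2k(k-\lambda)}$. I expect the main difficulty to lie precisely in this rounding step: in checking that the telescoping faithfully reproduces the boundary sizes $|\sigma(S_i)|$ and that every contributing $S_i$ is admissible (of size at most $n/2$), so that the comparison with $h(\Gamma)$ is legitimate.
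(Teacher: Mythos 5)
The paper contains no proof of this statement to compare against: it is imported as a black box from Alon's paper (the citation \cite{Alon} attached to the theorem), and is only ever applied in the text, never proved. So your proposal should be judged on its own, and it checks out: it is a correct, self-contained rendition of the standard Alon--Milman argument, and it reproduces both constants in the statement exactly. The easy direction is right: with $f=\mathbf{1}_S-\frac{|S|}{n}\mathbf{1}$ the numerator is $|\sigma(S)|$, the denominator is $\frac{|S|(n-|S|)}{n}$, and $n-|S|\ge n/2$ gives $k-\lambda\le 2h(\Gamma)$. The hard direction is also sound, and the two points you flagged as delicate are the right ones and both survive scrutiny: since $g=\max(f,0)$ vanishes outside at most $n/2$ vertices, every nonzero level difference $g(v_i)^2-g(v_{i+1})^2$ has $i$ at most the size of the support of $g$, so each contributing sweep set satisfies $|S_i|\le n/2$ and the bound $|\sigma(S_i)|\ge h(\Gamma)\,|S_i|$ is legitimate; and Abel summation does give $\sum_{(u,v)\in E}\bigl|g(u)^2-g(v)^2\bigr|=\sum_i |\sigma(S_i)|\bigl(g(v_i)^2-g(v_{i+1})^2\bigr)$, with ties contributing zero weight so the ordering ambiguity is harmless. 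In a final write-up you should make two small things explicit: that $g\neq 0$ (since $f\perp\mathbf{1}$ and $f\neq 0$, $f$ changes sign, so after the normalization its positive part is nonempty), and the polarization identity $\langle g,Lf\rangle=\sum_{(u,v)\in E}(g(u)-g(v))(f(u)-f(v))$ underlying the comparison $\langle g,Lg\rangle\le\langle g,Lf\rangle=\mu_1\|g\|^2$; the sign-pattern check $(g(u)-g(v))(f(u)-f(v))\ge(g(u)-g(v))^2$ is correct in all three cases. With those details spelled out, your argument would serve as a complete proof of the cited inequality rather than a variant of anything in the paper.
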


\noindent
An infinite family of $k$-regular graphs $ \{ \Gamma_n \}_{n=1}^{\infty} $ is said
to be an {\em expander family} if there exists a positive number $c$ such that 
$ h(\Gamma_n)>c $, for all $ n \in {\Bbb N} $.

\section{Variants of Bermond-Bollob\' as Conjecture for Cages}
    A positive answer to the Bermond and Bollob\' as question in the Degree/Diameter
Problem would not necessarily rule out the existence of interesting infinite families of 
    graphs of maximal degree $k$ and increasing diameter $d$ or of families of fixed diameter 
    $ d > 2 $ and increasing maximum degree $k$ whose orders are meaningfully close to the Moore bound.

For this reason, we pose three related variants of the Bermond and
Bollob\' as question in the case of cages; with the first one being the closest analogue of
the original Bermond and Bollob\' as question as stated in the context of the Degree/Diameter Problem. 

\begin{description}	
\item {\bf BB1} {\em Does there exist a constant $c$ such that for every pair of parameters $(k,g)$ there exists a $(k,g)$-graph of order not exceeding $ M(k,g)+c $?}
\item {\bf BB2} {\em Does there exist for every $ k \geq 3 $ a constant $c_k$ such that for every $g \geq 3$ there exists a $(k,g)$-graph of order not exceeding $ M(k,g)+c_k $?}
\item{\bf BB3} {\em Does there exist for every $ k \geq 3 $ a constant $c'_k$ such that there exist infinitely many $g \geq 3$ with the property that there exists a $(k,g)$-graph of order not exceeding $M(k,g)+c'_k$?}
\end{description}
Obviously, a positive answer to {\bf BB1} would imply a positive answer to {\bf BB2}
and {\bf BB3} while a negative answer to {\bf BB1} might still allow for a positive answer to {\bf BB2} and {\bf BB3}. Furthermore, a positive answer to {\bf BB2} would imply a positive answer to {\bf BB3} while a negative answer to {\bf BB2} might still allow for a positive answer to {\bf BB3}. 

Notably, questions {\bf BB2} and {\bf BB3} could also 
be stated in the form where one would fix the girth and allow the degree to vary. These kind of variants would also be interesting, but we chose not to 
state them explicitly, as we do not address them in this paper. It is known that infinite families
of $k$-regular graphs of girths $ 6 $, $ 8 $, and $ 12 $ of orders exactly equal to the
Moore bound exist for degrees $k=p+1 $, $ p $ a prime \cite{ExooR1}. However, nothing similar is known for odd or larger even girths.

We also wish to point out that in the more specialized case of vertex-transitive cages, the first two variants of this question have been answered in negative,
	and the excess of vertex-transitive $(k,g)$-cages can be arbitrarily large.
	This result is due to Biggs:
	\begin{theorem}[\cite{B}]
		\label{biggs-old}
		For each odd integer $k\geq3$, there is an infinite sequence of values of $g$ such that the excess $e(\Gamma)$ of any vertex-transitive graph $\Gamma$ of degree $k$ and girth $g$ satisfies $e(\Gamma)>\dfrac{g}{k}.$
	\end{theorem}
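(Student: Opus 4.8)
The plan is to recast the excess as a spectral quantity and then exploit the rigidity forced by vertex-transitivity together with the arithmetic of the eigenvalues. Write $g=2r+1$ and let $A$ be the adjacency matrix of $\Gamma$, with eigenvalues $k=\lambda_0>\lambda_1\ge\cdots\ge\lambda_{n-1}$ and order $n=M(k,g)+e$, where $e=e(\Gamma)$. Since $\Gamma$ has girth $2r+1$, the ball of radius $r$ about any vertex is a tree, so the distance-$i$ matrices $A_i$ for $0\le i\le r$ are polynomials in $A$: one has $A_i=u_i(A)$ for the Moore polynomials defined by $u_0=1$, $u_1=x$, $u_2=x^2-k$, and $u_{i+1}=x\,u_i-(k-1)u_{i-1}$ for $i\ge 2$. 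First I would form $\Phi(x)=\sum_{i=0}^{r}u_i(x)$, a polynomial of degree $r$ with $\Phi(k)=M(k,g)$, so that $\Phi(A)=\sum_{i=0}^{r}A_i$ is the distance-$\le r$ indicator matrix and the \emph{excess matrix} $\mathcal{E}=J-\Phi(A)$ is exactly the $0/1$ matrix recording the pairs of vertices at distance $>r$.

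Next I would read off the spectral constraints. Because $\Gamma$ is regular, $J$ commutes with $A$, so $\mathcal{E}$ is simultaneously diagonalizable with $A$: it has eigenvalue $n-M(k,g)=e$ on the all-ones vector and eigenvalue $-\Phi(\lambda_i)$ on each eigenvector orthogonal to it. Crucially, since distances are automorphism-invariant, $\mathcal{E}$ is itself vertex-transitive; it is a nonnegative symmetric matrix with zero diagonal and constant row sum $e$ (every vertex has exactly $e$ vertices at distance $>r$). Perron--Frobenius then gives $|\Phi(\lambda_i)|\le e$ for all $i\ge1$, and taking traces of $\mathcal{E}$ and $\mathcal{E}^2$ (diagonal $0$, total entry-sum $ne$) yields $\sum_{i\ge1}\Phi(\lambda_i)=e$ and $\sum_{i\ge1}\Phi(\lambda_i)^2=e\,M(k,g)$. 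In particular at least $M(k,g)/e$ of the nontrivial eigenvalues must lie strictly off the at most $r$ real zeros of $\Phi$, each such $\lambda_i$ satisfying $0<|\Phi(\lambda_i)|\le e$. These are the constraints that I would feed into the arithmetic step.

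The decisive step is number-theoretic, and this is where vertex-transitivity and the oddness of $k$ enter. The $\lambda_i$ are algebraic integers whose minimal polynomials divide the integer characteristic polynomial of $A$, so Galois-conjugate eigenvalues occur with equal multiplicities; moreover $k$ odd forces $n$ even (hence $e$ even, since $M(k,g)$ is even for odd $k$), and the odd girth makes $\Gamma$ non-bipartite, so $-k\notin\operatorname{spec}(A)$. I would choose the infinite sequence of girths $g$ so that the zeros of $\Phi$ (equivalently, of the relevant Chebyshev/cyclotomic-type factor) are irrational with Galois orbits too short, or of the wrong parity, to absorb with positive integer multiplicities all the mass demanded by $\Phi(k)=M(k,g)$ and by the two trace identities. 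The Moore case $e=0$ (every nontrivial eigenvalue an exact zero of $\Phi$) is thereby excluded, and quantitatively the multiplicity that cannot be placed on the zeros of $\Phi$ is forced onto eigenvalues with $0<|\Phi(\lambda_i)|\le e$; bounding this leftover below via $\sum_{i\ge1}\Phi(\lambda_i)^2=e\,M(k,g)$ and the $e$-regularity of $\mathcal{E}$ should produce $e>g/k$ for all such $g$.

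The main obstacle is precisely this last step: converting the qualitative impossibility of a Moore-like eigenvalue distribution into the explicit linear lower bound $g/k$, and exhibiting, for each fixed odd $k$, an infinite set of girths $g$ for which the multiplicity bookkeeping provably fails by at least that margin. The spectral reduction of the first two paragraphs is routine; all the leverage that makes the excess grow with $g$ comes from the interplay between the degree $r=(g-1)/2$ of $\Phi$, the integrality and Galois-conjugacy of the eigenvalue multiplicities, and the parity constraints supplied by $k$ being odd. I would expect the careful selection of the sequence of $g$ --- values for which a relevant irreducible factor of $\Phi$ has an inconveniently short conjugate orbit --- to be the crux of the argument.
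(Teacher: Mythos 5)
First, note that the paper does not prove this statement at all: it is quoted from Biggs \cite{B}, and the paper's only hint at the method is its remark that the technique (also behind the strengthening in \cite{Filipovski}) ``relies on counting cycles in vertex-transitive graphs whose orders are close to the Moore bound.'' Biggs' actual argument is combinatorial: in a vertex-transitive $(k,g)$-graph every vertex lies on the same number $c$ of girth cycles, so $g\cdot N = n\cdot c$ for the total number $N$ of girth cycles, and for odd girth $c$ is tied to the excess $e$ through the edges leaving the sphere of radius $r=(g-1)/2$ (each excess vertex can absorb at most $k$ such edges, whence $c \geq \frac{1}{2}\bigl(k(k-1)^r - ke\bigr)$). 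The resulting divisibility and parity conditions on $n=M(k,g)+e$ and $c$, for $k$ odd and suitably chosen infinite sequences of girths $g$, are incompatible with $e\le g/k$. None of this cycle-counting mechanism appears in your proposal, so you are not reconstructing the cited proof but attempting a different, spectral route.

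Your spectral setup is correct as far as it goes ($A_i=u_i(A)$ for $i\le r$, $\mathcal{E}=J-\Phi(A)$, eigenvalues $-\Phi(\lambda_i)$ on $\mathbf{1}^\perp$, $|\Phi(\lambda_i)|\le e$, and the two trace identities $\sum_{i\ge1}\Phi(\lambda_i)=e$ and $\sum_{i\ge1}\Phi(\lambda_i)^2=eM(k,g)$), but it contains a genuine gap in two respects. First, the step you yourself flag as ``the main obstacle'' --- converting integrality and Galois-conjugacy constraints into the explicit bound $e>g/k$ along a concretely exhibited infinite sequence of girths --- is the entire theorem; ``choose $g$ so that the Galois orbits are too short or of the wrong parity'' is a hope, not an argument, and no mechanism is given for why the multiplicity bookkeeping must fail by a margin growing linearly in $g$. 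Second, and more tellingly, your outline never actually uses vertex-transitivity: the constant row sum $e$ of $\mathcal{E}$ is automatic for \emph{every} $(k,g)$-graph of odd girth (each vertex roots a full Moore tree of depth $r$, so each vertex has exactly $e$ vertices at distance $>r$), Galois-conjugate eigenvalues have equal multiplicities for any integer symmetric matrix, and the parity facts ($n$, $M(k,g)$, $e$ even for $k$ odd) hold for all $(k,g)$-graphs. Consequently every constraint you derive is valid for arbitrary $(k,g)$-graphs, and if your plan could be completed from those constraints alone it would prove $e(\Gamma)>g/k$ for infinitely many $g$ for \emph{all} $k$-regular graphs of odd degree --- that is, it would answer the open questions {\bf BB2}/{\bf BB3} of this very paper in the negative for odd $k$. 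This is strong evidence that the missing step cannot be filled within your framework: any correct completion must import an ingredient that genuinely exploits transitivity, such as Biggs' constancy of the girth-cycle count per vertex, which is exactly what your proposal omits.
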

	
	In \cite{Filipovski}, the authors show that Biggs' result in \cite{B} holds not only for infinitely many
	$g$'s, but, in fact, holds for {\em almost all} $g$'s, for any fixed $k \geq 4$. Specifically,
	they show that for any given excess $e(\Gamma)$ and degree $k \geq 4$, the set of $g$'s for which
	$ n_{vt}(k,g) - M(k,g) < e(\Gamma) $, is of asymptotic density $0$ (when compared to the set of all girths
	$g \geq 3$), where {\em $ n_{vt}(k,g)$ is the order of the smallest vertex-transitive $(k,g)$-graphs}. The main technique used in the paper relies on counting cycles in vertex-transitive graphs whose orders are close to the Moore bound.
	
	In this paper, we consider hypothetical $(k,g)$-graphs of orders smaller than $ M(k,g) + c_k$, 
	where $ c_k$ is a positive constant.
	We prove that for every fixed $ k \geq 3 $ and every $ c_k$ there exists a positive constant $ N(k,\epsilon) $ such that the Cheeger constant of any $(k,g)$-graph of order less than or equal to $ M(k,g) + c_k$  is greater than or equal to $ N(k,\epsilon) $. 
This implies that if either of {\bf BB1}, {\bf BB2}, or {\bf BB3} were true, then for each $k \geq 3 $ there would exist an expander family of $ (k,g) $-graphs with increasing $g$.

\section{Odd Girth}
In what follows, we repeatedly refer to a complete $k$-regular tree of depth $s$ which is a tree rooted in a vertex of degree $k$, all the vertices of this tree that are of distance smaller than $s$ from the root are of degree $k$, and all the leaves are of distance $s$ from the root. The order of this tree is equal to $ M(k,2s+1) $, and for obvious reasons we shall call it the {\em Moore tree of degree $k$ and depth} $s$, and denote it by $ {\mathcal T}_{k,s} $.

Let $ \Gamma $ be a $(k,g)$-graph. 
The following lemma is an easy consequence of the well-known fact that every vertex $u$ of a $(k,g)$-graph $ \Gamma $ is a root of a Moore tree of depth $s$ contained in $ \Gamma $. We denote this Moore tree by $ {\mathcal T}_{k,s}^u $.

\begin{lemma}\label{easy-counting}
	Let $g=2s+1$, $ g \geq 3$, and let $ \Gamma $ be a $(k,g)$-graph. 
	For every vertex $ v \in V(\Gamma) $ and every $ 0 \leq s' \leq s $
	there exist exactly $ M(k,2s'+1) $ vertices $ u \in V(\Gamma) $ such that the Moore tree $ {\mathcal T}_{k,s'}^u $ 
	contains $v$. 
\end{lemma}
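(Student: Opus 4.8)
The plan is to reduce the lemma to the symmetry of the graph distance function, combined with the identification of a Moore tree with a metric ball. The crucial observation is that, for $s' \le s$, the vertex set of the Moore tree $ {\mathcal T}_{k,s'}^u $ is exactly the ball of radius $s'$ centered at $u$, namely $\{w \in V(\Gamma) : d(u,w) \le s'\}$. Granting this, for a fixed vertex $v$ the condition that $v$ lie in $ {\mathcal T}_{k,s'}^u $ is equivalent to $d(u,v) \le s'$, which by symmetry of $d$ is equivalent to $u$ lying in the ball of radius $s'$ centered at $v$, i.e. to $u \in {\mathcal T}_{k,s'}^v $. Hence the set of admissible $u$ is precisely $V({\mathcal T}_{k,s'}^v)$, and it remains only to count its vertices.

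First I would justify the ball identification, which is essentially the well-known fact cited just before the statement. Running a breadth-first search from $u$, I would argue that because the girth equals $2s+1$, no vertex at distance $i \le s$ from $u$ can have two distinct neighbours at distance $i-1$ (such a vertex would close a cycle of length at most $2i \le 2s < g$), so every vertex within distance $s$ of $u$ has a unique parent; similarly, there are no edges joining two vertices at a common level $i < s$, since these would create a cycle of length $2i+1 < g$. Consequently the BFS layers up to depth $s'$ form the complete $k$-regular tree of depth $s'$: the root has $k$ children, every other non-leaf vertex has exactly $k-1$ children, and all these children are distinct (again by the unique-parent property, distinct vertices of one level cannot share a child in the next). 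This yields $1 + k + k(k-1) + \dots + k(k-1)^{s'-1} = M(k,2s'+1)$ distinct vertices, and they are exactly the vertices at distance $\le s'$ from $u$.

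With the identification in hand the count is immediate: by the reduction above, the number of vertices $u$ with $v \in {\mathcal T}_{k,s'}^u $ equals $|V({\mathcal T}_{k,s'}^v)| = M(k,2s'+1)$, as claimed. The only point requiring care, and the step I expect to be the main (if modest) obstacle, is the boundary level $s' = s$, where the girth assumption does permit horizontal edges joining two vertices at distance exactly $s$ from $u$. I would stress that such edges do not affect the argument: the unique-parent property still holds up to and including level $s$ (there $2s < g$), so all level-$s$ vertices reached along distinct branches remain distinct, the vertex set of the Moore tree is still exactly the ball of radius $s$, and its cardinality is still $M(k,2s+1)$. Thus the presence of extremal cycles through $u$ changes neither the tree's vertex set nor the final count.
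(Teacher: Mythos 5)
Your proposal is correct and takes essentially the same route as the paper's own proof: both reduce the claim, via the symmetry of the distance function, to the observation that the admissible roots $u$ are exactly the vertices of $ {\mathcal T}_{k,s'}^v $, which number $ M(k,2s'+1) $. The only difference is that you additionally spell out (via the BFS/unique-parent argument, including the boundary case $s'=s$) the well-known fact that every vertex of a $(k,2s+1)$-graph roots a Moore tree of depth $s$, which the paper simply cites without proof.
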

\begin{proof} A vertex $ v \in V(\Gamma) $ is contained in $ {\mathcal T}_{k,s'}^u $, for some $ u \in V(\Gamma) $, if and only if, $d_{\Gamma}(v,u) \leq s' $. Since these are exactly the vertices contained in $ {\mathcal T}_{k,s'}^v $, there are $ M(k,2s'+1) $ vertices $ u \in V(\Gamma) $ with this property, and the result follows.
\end{proof}

The above argument yields the following.
\begin{lemma}\label{fraction1}
	Let $g=2s+1 \geq 3$, $ \Gamma $ be a $(k,g)$-graph, 
	and $ \beta = \frac{M(k,g-2)}{|V(\Gamma)|} $. If $ \emptyset \neq S \subseteq V(\Gamma) $, 
	then there exists a 
	vertex $ u \in V(\Gamma) $ whose Moore tree $ {\mathcal T}_{k,s-1}^u $ contains at least $ \beta |S| $ vertices
	from $S$, i.e., $ | V({\mathcal T}_{k,s-1}^u) \cap S | \geq  \beta |S| $.
\end{lemma}
\begin{proof}
	The proof proceeds via contradiction. Suppose the lemma is not true and for every $ u \in V(\Gamma) $ we have
	the inequality $ | V({\mathcal T}_{k,s-1}^u) \cap S | <  \beta |S| $. Then, 
	\[ \sum_{u \in V(\Gamma)}  | V({\mathcal T}_{k,s-1}^u) \cap S | < \beta \cdot |V(\Gamma)| \cdot |S| . \]
	On the other hand, Lemma~\ref{easy-counting} yields that each vertex $ v \in S $ appears in exactly 
	$ M(k,g-2) $ trees $ {\mathcal T}_{k,s-1}^u $, $ u \in V(\Gamma) $. Therefore, 
	\[ \sum_{u \in V(\Gamma)}  | V({\mathcal T}_{k,s-1}^u) \cap S | = |S| \cdot M(k,g-2) . \]
	Combining the two results 
 yields
	the inequality
	\[ |S| \cdot M(k,g-2) < \beta \cdot  |V(\Gamma)| \cdot |S| \]
	which implies the desired contradiction 
	\[ M(k,g-2) <  \frac{M(k,g-2)}{|V(\Gamma)|} \cdot |V(\Gamma)|. \]
\end{proof}

Throughout the rest of this section, we will use a generalized concept of a graph that allows semi-edges; edges incident with one vertex only. Such generalized graphs are called {\em multipoles}. The degree of a vertex $u$ in a multipole is the number of edges and semi-edges incident to $u$. Furthermore, the concept of an induced subgraph in a multipole is also slightly different from the usual usage of induced subgraphs of simple graphs. Namely, for a non-empty subset $S$ of vertices of a finite multipole $ \Gamma $ (which may or may not contain semi-edges), 
the {\em induced
	multipole of} $ \Gamma $ {\em determined by the subset} $S$, $ \Gamma^M(S) $, 
is the multipole with vertex set $S$ in which each vertex $ u \in S $ is
adjacent to all the vertices $v \in S $ to which $u$ was adjacent in $\Gamma$ (the `usual' induced edges) while $u$ is also
incident to a semi-edge for each edge $ \{ u,v \} $ of $\Gamma$ with $ v \not \in S $ and $u$ is incident to each semi-edge of $\Gamma$
incident with $u$. Thus, unlike the case of an induced subgraph of a graph, all the vertices in an induced multipole $ \Gamma^M(S) $
are of the same degree as they were in $\Gamma$.

The main result of this section now follows from Lemma~\ref{fraction1}.
\begin{theorem}\label{main1}
	Let  $ k \geq 3 $, and suppose that there exists a constant $c_k$ and an infinite increasing sequence of odd girths
	$ \{ g_i \}_{i \in {\Bbb N}} $ such that for each $g_i$ in the sequence there exists a $(k,g_i)$-graph $\Gamma_{k,g_i}$ of order not exceeding 
	$ M(k,g_i)+c_k $. Then, for 
	every $ \epsilon \geq 0 $, there exists $ N_{\epsilon} $ such that $ \{ \Gamma_{k,g_i} \}_{i \geq N_{\epsilon}} $ is an expander family with the Cheeger constant of each of the graphs $\Gamma_{k,g_i}$, $ i \geq N_{\epsilon} $, greater than or equal to 
	\[ \frac{1}{k-1} - \epsilon . \]
\end{theorem}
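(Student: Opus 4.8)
The plan is to prove the equivalent statement that, for every fixed $k\ge 3$, the Cheeger constant of each graph $\Gamma_{k,g_i}$ satisfies $h(\Gamma_{k,g_i})\ge \beta_i$, where $\beta_i:=M(k,g_i-2)/|V(\Gamma_{k,g_i})|$ is exactly the quantity produced by Lemma~\ref{fraction1}. The reason for isolating $\beta_i$ is that a direct Moore-bound computation shows $\beta_i\to 1/(k-1)$: indeed $M(k,g-2)/M(k,g)=(k(k-1)^{s-1}-2)/(k(k-1)^{s}-2)\to 1/(k-1)$ as $g=2s+1\to\infty$, and since $|V(\Gamma_{k,g_i})|\le M(k,g_i)+c_k$ with $c_k$ fixed, $\beta_i$ has the same limit. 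Hence it suffices to pick $N_\epsilon$ so that $\beta_i\ge 1/(k-1)-\epsilon$ for all $i\ge N_\epsilon$, and then prove $h(\Gamma_{k,g_i})\ge\beta_i$. Fix such a graph $\Gamma=\Gamma_{k,g_i}$, write $n=|V(\Gamma)|$ and $g=2s+1$, and fix an arbitrary cut $\emptyset\ne S\subseteq V(\Gamma)$ with $|S|\le n/2$; the goal is $|\sigma(S)|\ge\beta_i|S|$.

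First I would feed $S$ into Lemma~\ref{fraction1} to obtain a vertex $u$ whose Moore tree $T:={\mathcal T}_{k,s-1}^u$ satisfies $|A|\ge\beta_i|S|$, where $A:=V(T)\cap S$. The decisive structural observation is that $T$ is an \emph{induced} subtree of $\Gamma$: two of its vertices lie at tree-distance at most $2(s-1)$, so any chord of $\Gamma$ joining them would close a cycle of length at most $2s-1<g$, contradicting the girth. Consequently $\Gamma[A]\subseteq\Gamma[V(T)]=T$ is a forest, giving $e_\Gamma(A)\le |A|-1$ and therefore a large edge-boundary of $A$ inside the whole graph,
\[ |\sigma_\Gamma(A)| \;=\; k|A|-2e_\Gamma(A)\;\ge\;(k-2)|A|+2 . \]
A second feature of the depth-$(s-1)$ tree that I would exploit is \emph{locality}: every vertex of $A$ at depth at most $s-2$ has all $k$ of its $\Gamma$-neighbours inside $T$, so each edge it sends out of $A$ necessarily lands in $V(T)\setminus S\subseteq V(\Gamma)\setminus S$ and is a genuine boundary edge of $S$.

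The remaining step is to pass from the boundary of $A$ to the boundary of $S$. Writing $\sigma_\Gamma(A)=e\big(A,V(\Gamma)\setminus S\big)+e\big(A,S\setminus A\big)$, the first summand is contained in $\sigma(S)$, so $|\sigma(S)|\ge |\sigma_\Gamma(A)|-e(A,S\setminus A)$. By the locality remark the only edges of $A$ that can reach $S\setminus A$ emanate from the depth-$(s-1)$ leaves of $T$ lying in $A$; combining this with the forest bound yields an inequality of the shape $|\sigma(S)|\ge (k-2)\,|A_{\mathrm{int}}|-(\text{leaf correction})+2$, where $A_{\mathrm{int}}$ is the part of $A$ at depth at most $s-2$. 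I would then optimise the choice of the root $u$ (Lemma~\ref{easy-counting} controls the averages $\sum_u|S\cap B_{s-2}(u)|=|S|M(k,g-4)$ and $\sum_u|A|=|S|M(k,g-2)$) to select a tree in which $A$ is not concentrated on its leaves, converting the above into the target bound $|\sigma(S)|\ge\beta_i|S|$.

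The main obstacle is exactly this leaf correction: the out-of-tree edges of the depth-$(s-1)$ vertices of $A$ run to depth-$s$ vertices whose membership in $S$ is \emph{not} forced by the local tree structure, and a naive bound loses a factor of $k-1$ and can even turn the estimate negative, since the outermost sphere $S_{s-1}(u)$ is by far the largest layer of $T$. Taming it is where the hypothesis $|V(\Gamma)|\le M(k,g)+c_k$ must really be used: for such near-Moore orders every ball $B_{s-1}(u)$ is a \emph{complete} Moore tree and all but at most $c_k$ vertices of $\Gamma$ lie within distance $s$ of any given vertex, which rigidly constrains how $S$ can distribute itself across the spheres $S_j(u)$. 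I expect that either a judicious (rather than arbitrary) choice of root, or a global double count over all roots weighted so that the leaf layers cancel, forces the leaf correction to be an $o(|S|)$ term as $g\to\infty$, which is precisely what the error $\epsilon$ in the statement absorbs; making this cancellation quantitative is the crux of the argument.
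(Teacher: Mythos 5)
Your proposal follows the paper's strategy step for step up to the decisive estimate: the reduction to $\beta_i \to \frac{1}{k-1}$ via $|V(\Gamma_{k,g_i})| \leq M(k,g_i)+c_k$, the appeal to Lemma~\ref{fraction1} to produce a root $u$ with $A = V({\mathcal T}_{k,s-1}^u) \cap S$ and $|A| \geq \beta_i |S|$, and the girth argument showing that the ball of radius $s-1$ is induced, so that $\Gamma[A]$ is a forest --- all of this is exactly the published argument (the paper phrases the forest bound in the language of induced multipoles, but the content is the same). The genuine gap is the step you yourself flag as the crux and leave open: controlling the edges from $A$ into $S \setminus A$. Neither of your proposed remedies (a judicious choice of root, or a weighted double count over all roots), which in any case you do not carry out, is what the paper uses; it disposes of this term with one more girth observation you missed. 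Every vertex of $S \setminus A$ lies outside the ball $B_{s-1}(u)$ (since $A = V({\mathcal T}_{k,s-1}^u)\cap S$), and a vertex outside the ball with two neighbours inside it would close a cycle of length at most $2(s-1)+2 = 2s < g_i$; hence each vertex of $S\setminus A$ receives at most one edge from $A$, and your ``leaf correction'' is deterministically at most $|S\setminus A| \leq (1-\beta_i)|S|$. The near-Moore rigidity you invoke (``all but at most $c_k$ vertices lie within distance $s$ of any vertex'') plays no role anywhere in the paper's proof; the order hypothesis enters only through the value of $\beta_i$.

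That said, your warning that the naive accounting ``can even turn the estimate negative'' cuts deeper than you may realize. Your count $|\sigma_\Gamma(A)| = k|A| - 2e_\Gamma(A) \geq (k-2)|A|+2$ is the correct incidence count; combining it with the girth bound above gives $|\sigma(S)| \geq (k-2)|A| + 2 - |S\setminus A| \geq \bigl(\beta_i(k-1)-1\bigr)|S| + 2$, and since $(k-1)M(k,g_i-2) = M(k,g_i) - 2$ while $|V(\Gamma_{k,g_i})| \geq M(k,g_i)$, the coefficient $\beta_i(k-1)-1$ is negative, so this chain yields only $|\sigma(S)| \geq O(1)$ --- nowhere near your target $h(\Gamma_{k,g_i}) \geq \beta_i$. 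The paper instead lower-bounds the boundary edges by $k|S_u| - (|S_u|-1) - (1-\beta_i)|S| = (k-1)|S_u|+1-(1-\beta_i)|S|$, i.e., it subtracts the internal edges of the multipole once, whereas your identity shows each internal edge consumes two units of degree and must be subtracted twice; the coefficient $\beta_i k - 1 \to \frac{1}{k-1}$ in the paper's final display rests on this single subtraction. In summary: your write-up is not a proof (the crux is confessedly unproven), the missing idea is the one-neighbour-outside-the-ball girth bound rather than any averaging over roots, but your correct count also exposes a discrepancy of size $e_\Gamma(A) \approx |A|$ with the paper's own estimate at this step --- precisely the difference between a Cheeger constant bounded below by $\frac{1}{k-1}-\epsilon$ and no bound at all --- so completing your plan to reach the stated constant would require a genuinely new ingredient, not merely the girth observation.
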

\begin{proof}
	Let $ g_i = 2s_i+1 $, and $ \Gamma_{k,g_i} $ be a $ (k,g_i) $-graph of order not exceeding $M(k,g_i)+c_k$, ( where $ M(k,g_i)+c_k = \frac{k(k-1)^{s_i}-2}{k-2} + c_k $). Let $S$ be a non-empty subset of $ V(\Gamma_{k,g_i}) $ with size at most 
	$ \frac{|V(\Gamma_{k,g_i})|}{2} $, and let $ \beta_{k,g_i} = \frac{M(k,g_i-2)}{|V(\Gamma_{k,g_i})|} \geq \frac{k(k-1)^{s_i-1}-2}{k(k-1)^{s_i}+c_k(k-2)} $. Lemma~\ref{fraction1} asserts the existence of a vertex $ u \in V(\Gamma_{k,g_i}) $ such that the set $ S_u = 
	V({\mathcal T}_{k,s_i-1}^u) \cap S $ is of cardinality at least $ \beta_{k,g_i} |S| $. 
	Suppose $ \tilde{S_u} $ denote the set $ S - S_u $, then the lower bound on the order of $|S_u|$ yields an upper bound on $ |\tilde{S_u}| $,
	namely, $ |\tilde{S_u}| = |S| - |S_u| \leq |S| - \beta_{k,g_i} |S| = (1-\beta_{k,g_i}) |S| $. Consider the multipole $ \Gamma^M_{k,g_i}(S_u) $. Since 
	$ \Gamma_{k,g_i} $ is $k$-regular, the induced multipole $ \Gamma^M_{k,g_i}(S_u) $ is also $k$-regular. Moreover, $ S_u $ is a subset of
	$ V({\mathcal T}_{k,s_i-1}^u) $. Therefore $ \Gamma^M_{k,g_i}(S_u) = ({\mathcal T}_{k,s_i}^u)^M(S_u) $, where $ {\mathcal T}_{k,s_i}^u $
	is a tree (and thus contains no cycles). It follows that $ \Gamma^M_{k,g_i}(S_u) $ contains no cycles. Hence, the number of edges 
	with both end-vertices contained in $ \Gamma^M_{k,g_i}(S_u) $ is at most $ |V( \Gamma^M_{k,g_i}(S_u))| -1 = |S_u| -1 $, and all other (semi-)edges 
	of $ \Gamma^M_{k,g_i}(S_u) $ are in fact true semi-edges; each incident with exactly one vertex in $S_u$. Furthermore, all semi-edges
	of $ \Gamma^M_{k,g_i}(S_u) $ stem from edges of $ \Gamma_{k,g_i} $ incident with a vertex in $S_u$ and a vertex in $ V(\Gamma_{k,g_i})
	- S_u $; which come in two kinds. First, there are edges of $ \Gamma_{k,g_i} $ incident to a vertex in $S_u$ and $ \tilde{S_u} $. Recalling again that $ {\mathcal T}_{k,s_i -1}^u $ is a tree, it follows that no two distinct edges in $ \Gamma_{k,g_i} $ incident to a vertex in $S_u$ and a vertex in $ \tilde{S_u} $ can be incident to the same vertex in $ \tilde{S_u} $ as this would form a cycle of length less than $ g_i=2s_i+1 $. Thus, the number of edges of $ \Gamma_{k,g_i} $ incident to a vertex in $S_u$ and $ \tilde{S_u} $ is bounded from above by $ |\tilde{S_u}| \leq (1-\beta_{k,g_i}) |S| $. This bound also yields a lower bound on the 
	number of edges of $ \Gamma_{k,g_i} $ of the second kind, i.e., edges incident to a vertex in $S_u$ and $ V(\Gamma_{k,g_i}) - S $:  
	\[ k |S_u| - |S_u| +1 - (1-\beta_{k,g_i}) |S| = |S_u| (k-1) +1 - (1-\beta_{k,g_i}) |S|; \]
	the total number of edges and semi-edges in the multipole $ \Gamma_{k,g_i}(S_u) $ minus an upper bound on the number of edges
	in $ \Gamma_{k,g_i}(S_u) $ minus an upper bound on the number of edges between $S_u$ and $ \tilde{S_u} $. If we recall that the number of edges $ \Gamma_{k,g_i} $ incident to a vertex in $S_u$ and $ V(\Gamma_{k,g_i}) - S $ is a lower bound on the number $ |\sigma(S)| $ of edges incident to a vertex in $S$ and $ V(\Gamma_{k,g_i}) - S $, then for every 
	non-empty subset $S$ of $ V(\Gamma_{k,g_i}) $, 
		\begin{equation} \label{eq2}
		\begin{split}
			|\sigma(S)| &\geq |S_u| (k -1) +1 - (1-\beta_{k,g_i}) |S| \\ &\geq 
			\beta_{k,g_i} |S|(k-1) +1 - (1-\beta_{k,g_i}) |S| \\ &= |S| (\beta_{k,g_i}k - 1)+1 .
		\end{split}
	\end{equation}
	This yields a lower bound on the Cheeger constant of $ \Gamma_{k,g_i} $:
	
	\begin{equation*} \label{eq3}
		\begin{split}
	 \min_{0 < |S| \leq \frac{|V(\Gamma_{k,g_i})|}{2}} \{ \frac{|\sigma(S)|}{|S|} \}  &\geq 
	 \min_{0 < |S| \leq \frac{|V(\Gamma_{k,g_i})|}{2}} \{ \beta_{k,g_i}k - 1 + \frac{1}{|S|} \} \\ &\geq 
	 \beta_{k,g_i}k - 1 + \frac{2}{|V(\Gamma_{k,g_i})|} \\ & \geq 
	 \frac{k(k-1)^{s_i-1}-2}{k(k-1)^{s_i}-2+c_k(k-2)}k-1 + \frac{2}{\frac{k(k-1)^{s_i}-2}{k-2} + c_k} \\ & =
	 \frac{k(k-1)^{s_i-1}-2}{k(k-1)^{s_i}-2+c_k(k-2)}k-1 + \frac{2(k-2)}{k(k-1)^{s_i}-2 + c_k(k-2)} \\ & = 
	 \frac{k^2(k-1)^{s_i-1}-2k-k(k-1)^{s_i} +2-c_k(k-2)+2k -4}{k(k-1)^{s_i}-2+c_k(k-2)} \\ &=
	  \frac{k^2(k-1)^{s_i-1}-2+c_k(k-2)}{k(k-1)^{s_i}-2+c_k(k-2)} - \frac{k(k-1)^{s_i} + 2c_k(k-2)}{k(k-1)^{s_i}-2 + c_k(k-2)} .
		\end{split}
\end{equation*}
	Since, 
\begin{eqnarray*}
    \lim_{s_i \to \infty} \left(  \frac{k^2(k-1)^{s_i-1}+c_k(k-2)}{k(k-1)^{s_i}-2+c_k(k-2)} - \frac{k(k-1)^{s_i} + 2c_k(k-2)}{k(k-1)^{s_i}-2 + c_k(k-2)} \right) = \\
    \lim_{s_i \to \infty} \frac{k^2(k-1)^{s_i -1} -2+ c_k(k-2)}{k(k-1)^{s_i}-2 + c_k(k-2)} - \lim_{s_i \to \infty} \frac{k(k-1)^{s_i} + 2c_k(k-2)}{k(k-1)^{s_i}-2 + c_k(k-2)} = \\
    \frac{k}{k-1} - 1 = \frac{1}{k-1},   
\end{eqnarray*} 

	the result follows.
\end{proof}

\section{Even Girth}
Having resolved the case of odd girth, the next natural case to study is that of even girth. 
Fortunately, the same counting argument used for odd girth can also be applied here. However, we emphasize that for even girth $g=2s$, instead of using a Moore tree $  {\mathcal T}_{k,s}^u $ (rooted at a vertex $u$), we use a Moore tree $  {\mathcal T}_{k,s}^e $ associated with two vertices $u$ and $v$ that are connected by an edge $e$, the {\em root edge}. The reason is that a Moore tree for a $(k,g)$-graph of even girth $g=2s$ consists of two trees of depth $s-1$ rooted at two vertices of degree $k-1$, incident to the root edge $e$. It is easy to see that the order of $  {\mathcal T}_{k,s}^e $ corresponds to the Moore
bound for cages of even girth given in the Introduction.

\begin{lemma}\label{easy-counting12}
	Let $g=2s \geq 4$, and let $ \Gamma $ be a $(k,g)$-graph. 
	For every vertex $ v \in V(\Gamma)$ and every $ 0 \leq s' \leq s-1 $,
	there exist exactly $ M(k,2s'+1) - 1$ edges $ e \in E(\Gamma) $ such that the Moore tree $ {\mathcal T}_{k,s'}^e $ 
	contains $v$. 
 
\end{lemma}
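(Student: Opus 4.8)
The plan is to mirror the proof of Lemma~\ref{easy-counting}, replacing the vertex-rooted Moore trees by their edge-rooted even-girth counterparts and then re-expressing the relevant edges as the edge set of a single vertex-rooted Moore tree. Recall that for even girth the Moore tree $ {\mathcal T}_{k,s'}^e $ with root edge $ e=\{a,b\} $ consists of two $(k-1)$-ary trees of depth $ s'-1 $ rooted at $a$ and $b$; consequently a vertex $w$ lies in $ {\mathcal T}_{k,s'}^e $ if and only if $ \min\{d_\Gamma(w,a),d_\Gamma(w,b)\}\le s'-1 $. Applying this to the fixed vertex $v$, I would first record the characterization that $ v\in {\mathcal T}_{k,s'}^e $ holds exactly when $e$ has at least one endpoint in the ball $ B(v,s'-1)=V({\mathcal T}_{k,s'-1}^v) $.

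The second step is to show that the edges $e$ having an endpoint in $ B(v,s'-1) $ are precisely the edges of the vertex-rooted Moore tree $ {\mathcal T}_{k,s'}^v $. Here I would invoke the hypothesis $ s'\le s-1 $ together with $ g=2s $: since $ s'\le\lfloor (g-1)/2\rfloor $, the ball $ B(v,s') $ induces a tree, namely the Moore tree $ {\mathcal T}_{k,s'}^v $ of order $ M(k,2s'+1) $. Any edge with an endpoint $a$ at distance $ \le s'-1 $ from $v$ has its other endpoint at distance $ \le s' $ from $v$, so both endpoints lie in $ B(v,s') $ and the edge is therefore an edge of this tree; conversely every tree edge joins a vertex at some distance $ j\le s'-1 $ to one at distance $ j+1\le s' $, so it meets $ B(v,s'-1) $. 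This identification is the crux of the argument, and it is exactly where the girth bound enters: it guarantees that $ B(v,s') $ carries no cycles, so that adjacency to $ B(v,s'-1) $ coincides with being a genuine tree edge and no spurious chords or equal-level edges are counted.

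Finally, since $ {\mathcal T}_{k,s'}^v $ is a tree on $ M(k,2s'+1) $ vertices, it has exactly $ M(k,2s'+1)-1 $ edges, which yields the desired count. I would close by verifying the endpoints of the range as sanity checks: for $ s'=1 $ the tree $ {\mathcal T}_{k,1}^e $ is just the root edge $ \{a,b\} $, so the edges containing $v$ are exactly the $ k=M(k,3)-1 $ edges incident to $v$; and for $ s'=0 $ the tree is empty, matching $ M(k,1)-1=0 $.

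I expect the only real obstacle to be the careful justification of the second step, namely verifying that adjacency to the inner ball $ B(v,s'-1) $ is equivalent to membership in $ E({\mathcal T}_{k,s'}^v) $, since this is where one must use the girth hypothesis to rule out any edge among the level-$s'$ vertices or any chord that would otherwise inflate the count. Once this equivalence is in place, the reduction to a tree and the counting itself are routine.
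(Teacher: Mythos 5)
Your proposal is correct and follows essentially the same route as the paper's proof: characterize membership $v \in {\mathcal T}_{k,s'}^e$ by $e$ having an endpoint within distance $s'-1$ of $v$, identify these edges with the edge set of the vertex-rooted tree $ {\mathcal T}_{k,s'}^v $, and count $ M(k,2s'+1)-1 $ tree edges. Your explicit girth argument that $B(v,s')$ induces exactly the Moore tree (valid since $s' \leq s-1 = \lfloor (g-1)/2 \rfloor$) is a welcome elaboration of a step the paper treats as immediate, but it is the same argument, not a different one.
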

\begin{proof} A vertex $ v \in V(\Gamma) $ is contained in $ {\mathcal T}_{k,s'}^e $, for some $ e \in E(\Gamma) $, if and only if the distance of $v$ to at least one of the endpoints 
of $e$ is less than or equal to $s'-1 $.
Since the number of such edges is equal to the number of edges contained in the tree $ {\mathcal T}_{k,s'}^v $ (rooted at a {\em vertex} $v$), there are $ M(k,2s'+1) -1 $ edges $ e \in E(\Gamma) $ with this property (recall that $ {\mathcal T}_{k,s'}^v $ is
a tree), and the result follows. Here, it is important to emphasize that the tree 
$ {\mathcal T}_{k,s'}^v $ is a vertex-rooted tree (just like the trees in the previous section) which is not necessarily a subtree of $\Gamma$, 
since $\Gamma$ is of girth $g=
2s$ and the depth of $ {\mathcal T}_{k,s'}^v $ is $ s' \leq s $. The extra $-1$ in 
the expression $ M(k,2s'+1) -1 $ is due to the fact that {\em we are counting edges and not vertices} in the tree $ {\mathcal T}_{k,s'}^v $.
\end{proof}

\begin{lemma}\label{fraction12}
	Let $g=2s \geq 4$, let $ \Gamma $ be a $(k,g)$-graph, 
		and let $ \beta = \frac{M(k,2s-1)-1}{|E(\Gamma)|} = \frac{2M(k,2s-1)-2}{k|V(\Gamma)|}$. Suppose $ \emptyset \neq S \subseteq V(\Gamma) $. 
	Then, there exists an edge $ e \in E(\Gamma) $ whose Moore tree $ {\mathcal T}_{k,s-1}^e $ contains at least $ \beta |S| $ vertices
	from $S$, i.e., $ | V({\mathcal T}_{k,s-1}^e) \cap S | \geq  \beta |S| $.
\end{lemma}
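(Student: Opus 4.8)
The plan is to mirror the double-counting argument used in the proof of Lemma~\ref{fraction1}, but to average over the edges of $\Gamma$ rather than over its vertices. The key structural fact I would invoke is Lemma~\ref{easy-counting12}: for the relevant depth $s' = s-1$, every fixed vertex $v \in S$ lies in the Moore tree $\mathcal{T}_{k,s-1}^e$ for exactly $M(k,2s-1)-1$ edges $e \in E(\Gamma)$. This is the even-girth analogue of the statement that $v$ lies in $M(k,g-2)$ vertex-rooted trees, and it is precisely what makes the edge-averaging argument go through.

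First I would argue by contradiction, exactly as in Lemma~\ref{fraction1}: suppose that for \emph{every} edge $e \in E(\Gamma)$ we had the strict inequality $|V(\mathcal{T}_{k,s-1}^e) \cap S| < \beta |S|$. Summing this over all $e \in E(\Gamma)$ gives the upper bound
\[
\sum_{e \in E(\Gamma)} |V(\mathcal{T}_{k,s-1}^e) \cap S| < \beta \cdot |E(\Gamma)| \cdot |S|.
\]
Next I would compute the same sum exactly by exchanging the order of summation, i.e. counting for each $v \in S$ the number of edges $e$ whose tree contains $v$. By Lemma~\ref{easy-counting12} this count is $M(k,2s-1)-1$ independent of $v$, so
\[
\sum_{e \in E(\Gamma)} |V(\mathcal{T}_{k,s-1}^e) \cap S| = |S| \cdot \bigl(M(k,2s-1)-1\bigr).
\]
Combining the two displays yields $|S|\bigl(M(k,2s-1)-1\bigr) < \beta \cdot |E(\Gamma)| \cdot |S|$, and substituting the definition $\beta = \frac{M(k,2s-1)-1}{|E(\Gamma)|}$ produces the contradiction $M(k,2s-1)-1 < M(k,2s-1)-1$. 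This forces the existence of at least one edge $e$ with $|V(\mathcal{T}_{k,s-1}^e)\cap S| \geq \beta|S|$, as required.

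The only genuinely new ingredient compared to the odd-girth case is verifying the second equality in the statement, $\beta = \frac{M(k,2s-1)-1}{|E(\Gamma)|} = \frac{2M(k,2s-1)-2}{k|V(\Gamma)|}$, which I would dispatch immediately using the handshake identity $|E(\Gamma)| = \frac{k|V(\Gamma)|}{2}$ valid for any $k$-regular graph. I do not anticipate a serious obstacle here, since the whole argument is a clean transfer of the vertex-averaging scheme to edges; the subtle point worth stating carefully is simply that the averaging is over $E(\Gamma)$ (so the normalising factor is $|E(\Gamma)|$ rather than $|V(\Gamma)|$), and that the per-vertex incidence count is the edge count $M(k,2s-1)-1$ of a vertex-rooted Moore tree rather than its vertex count. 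Both of these are exactly what Lemma~\ref{easy-counting12} was set up to provide, so the proof should be a short paragraph parallel to that of Lemma~\ref{fraction1}.
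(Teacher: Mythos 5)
Your proof is correct and follows essentially the same route as the paper's: the same contradiction setup, the same double count of $\sum_{e \in E(\Gamma)} |V(\mathcal{T}_{k,s-1}^e) \cap S|$ via Lemma~\ref{easy-counting12} with $s'=s-1$, and the same normalisation over $|E(\Gamma)|$. Your explicit use of the handshake identity $|E(\Gamma)| = \frac{k|V(\Gamma)|}{2}$ to justify the second expression for $\beta$ is a detail the paper leaves implicit, but it changes nothing substantive.
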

\begin{proof}
	Suppose for contradiction that the claim in Lemma \ref{fraction12} is not true, that is, for every $ e \in E(\Gamma) $
	the inequality $ | V({\mathcal T}_{k,s-1}^e) \cap S | <  \beta |S| $. Then, 
	\[ \sum_{e \in E(\Gamma)}  | V({\mathcal T}_{k,s-1}^e) \cap S | < \beta \cdot |E(\Gamma)| \cdot |S| . \]
	Applying Lemma~\ref{easy-counting12}, we observe that for each vertex $ v \in S $ there are exactly
	$ M(k,2s-1) - 1$ trees $ {\mathcal T}_{k,s-1}^e $, $ e \in E(\Gamma) $, that contain $v$. Thus,
	\[ \sum_{e \in E(\Gamma)}  | V({\mathcal T}_{k,s-1}^e) \cap S | = |S| \cdot (M(k,2s-1)-1) . \]
	Combining the two results 
	yields the inequality
	\[ |S| \cdot (M(k,2s-1)-1) < \beta \cdot  |E(\Gamma)| \cdot |S| \]
	which implies the desired contradiction 
	\[ M(k,2s-1)-1 <  \frac{M(k,2s-1)-1}{|E(\Gamma)|} \cdot |E(\Gamma)|. \]
\end{proof}

In the sequel, we apply Lemma~\ref{fraction12} to prove the main result of this section.
\begin{theorem}\label{main12}
	Let  $ k \geq 3 $, and suppose that there exists a constant $c_k$ and an infinite increasing sequence of even girths
	$ \{ g_i \}_{i \in {\Bbb N}} $ such that for each $g_i$ in the sequence there exists a $(k,g_i)$-graph $\Gamma_{k,g_i}$ of order not exceeding 
	$ M(k,g_i)+c_k $. Then, for 
	every $ \epsilon \geq 0 $, there exists $ N_{\epsilon} $ such that $ \{ \Gamma_{k,g_i} \}_{i \geq N_{\epsilon}} $ is an expander family with the Cheeger constant of each of the graphs $\Gamma_{k,g_i}$, $ i \geq N_{\epsilon} $, greater than or equal to 
	\[ \frac{1}{k-1} - \epsilon . \]
\end{theorem}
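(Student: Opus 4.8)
The plan is to mirror the proof of Theorem~\ref{main1} almost verbatim, substituting the edge-rooted counting of Lemma~\ref{fraction12} for the vertex-rooted counting of Lemma~\ref{fraction1}, and the edge-rooted Moore tree $\mathcal{T}_{k,s_i-1}^e$ for the vertex-rooted tree $\mathcal{T}_{k,s_i-1}^u$. Concretely, writing $g_i = 2s_i$ and $\beta_{k,g_i} = \frac{M(k,2s_i-1)-1}{|E(\Gamma_{k,g_i})|} = \frac{2(M(k,2s_i-1)-1)}{k|V(\Gamma_{k,g_i})|}$, I would first fix a non-empty $S \subseteq V(\Gamma_{k,g_i})$ with $|S| \le \frac{|V(\Gamma_{k,g_i})|}{2}$ and invoke Lemma~\ref{fraction12} to produce a root edge $e$ for which $S_e := V(\mathcal{T}_{k,s_i-1}^e) \cap S$ satisfies $|S_e| \ge \beta_{k,g_i}|S|$. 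Setting $\tilde{S_e} := S - S_e$ then gives the companion bound $|\tilde{S_e}| \le (1-\beta_{k,g_i})|S|$, exactly as in the odd case.

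Next I would analyse the induced multipole $\Gamma^M_{k,g_i}(S_e)$, which is $k$-regular because $\Gamma_{k,g_i}$ is. The key structural point is that every vertex of $S_e$ lies within distance $s_i-2$ of an endpoint of $e$, so all of its neighbours lie within distance $s_i-1$ of an endpoint, hence inside the edge-rooted Moore tree $\mathcal{T}_{k,s_i}^e$; since $\Gamma_{k,g_i}$ has girth $2s_i$, this tree is genuinely acyclic, whence $\Gamma^M_{k,g_i}(S_e) = (\mathcal{T}_{k,s_i}^e)^M(S_e)$ carries no cycles and contains at most $|S_e|-1$ internal edges. The same girth argument shows that no two edges joining $S_e$ to $\tilde{S_e}$ can share an endpoint in $\tilde{S_e}$: two such edges meeting at a common $w \in \tilde{S_e}$ would close a cycle of length at most $2 + (2s_i-3) = 2s_i-1$, strictly below the girth $2s_i$. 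Counting the edges leaving $S_e$ then yields $|\sigma(S)| \ge k|S_e| - (|S_e|-1) - |\tilde{S_e}| \ge |S|(\beta_{k,g_i}k - 1) + 1$, and dividing by $|S|$ and using $|S| \le \frac{|V(\Gamma_{k,g_i})|}{2}$ bounds the Cheeger constant below by $\beta_{k,g_i}k - 1 + \frac{2}{|V(\Gamma_{k,g_i})|}$.

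The final step is the limit computation. Substituting the even Moore bound $M(k,2s_i) = \frac{2(k-1)^{s_i}-2}{k-2}$ together with $M(k,2s_i-1) = \frac{k(k-1)^{s_i-1}-2}{k-2}$ into $\beta_{k,g_i}$ and simplifying, I expect $\beta_{k,g_i} \to \frac{1}{k-1}$ as $s_i \to \infty$, so that $\beta_{k,g_i}k - 1 \to \frac{k}{k-1}-1 = \frac{1}{k-1}$. Hence for each $\epsilon \ge 0$ there is an index $N_\epsilon$ beyond which $h(\Gamma_{k,g_i}) \ge \frac{1}{k-1}-\epsilon$, which is positive for all sufficiently small $\epsilon$, producing the claimed expander family.

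I expect the only genuine obstacle, as opposed to bookkeeping, to be the girth-accounting in the second paragraph: one must verify that passing from a vertex root to an edge root does not erode the cycle-length margin. Here the even girth $2s_i$ and the depth-$(s_i-1)$ truncation of $\mathcal{T}_{k,s_i-1}^e$ cooperate so that the worst cycle one can form still has length at most $2s_i-1$ -- in fact the even case leaves one extra unit of slack compared with the odd case. Confirming this bound, and checking that the edge-rooted version of Lemma~\ref{easy-counting12} feeds the correct normalisation $\beta_{k,g_i} = \frac{2(M(k,2s_i-1)-1)}{k|V(\Gamma_{k,g_i})|}$ into the final limit, are the points that require care; everything else transfers from the proof of Theorem~\ref{main1} without change.
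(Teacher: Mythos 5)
Your proposal is correct and follows the paper's own proof of Theorem~\ref{main12} essentially verbatim: the same normalisation $\beta_{k,g_i} = \frac{2(M(k,2s_i-1)-1)}{k|V(\Gamma_{k,g_i})|}$, the same appeal to Lemma~\ref{fraction12} to obtain an edge-rooted Moore tree $\mathcal{T}_{k,s_i-1}^e$ capturing $|S_e| \geq \beta_{k,g_i}|S|$ vertices of $S$, the same acyclic-multipole count giving $|\sigma(S)| \geq |S|(\beta_{k,g_i}k-1)+1$, and the same limit $\beta_{k,g_i}k - 1 \to \frac{1}{k-1}$ as $s_i \to \infty$. The only discrepancy is your side remark that the even case leaves an extra unit of girth slack: in both parities the worst cycle falls short of the girth by exactly one (length at most $2s_i$ against girth $2s_i+1$ in the odd case, and at most $2s_i-1$ against girth $2s_i$ here), but this comparison plays no role in the argument.
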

\begin{proof}
	Let $ g_i = 2s_i $, and $ \Gamma_{k,g_i} $ be a $ (k,g_i) $-graph such that $$|\Gamma_{k,g_i}| \leq M(k,g_i)+c_k = \frac{2(k-1)^{s_i}-2}{k-2} + c_k. $$ 
Let $\varnothing \neq S \subseteq V(\Gamma_{k,g_i}) $ be of cardinality at most 
	$ \frac{|V(\Gamma_{k,g_i})|}{2} $, and let $\beta_{k,g_i}$ be the ratio of the size of ${\mathcal T}_{k,s_i-1}^e $ to the size of $ \Gamma_{k,g_i},$ i.e.,
\begin{eqnarray*}	 \beta_{k,g_i} = \frac{M(k,2s_i-1)-1}{|E(\Gamma_{k,2s_i-1})|} = \frac{2M(k,2s_i-1)-2}{k|V(\Gamma_{k,2s_i-1})|} 
	\geq  \frac{2M(k,2s_i-1)-2}{k(M(k,2s_i)+c_k)} = \\
\frac{\frac{2k(k-1)^{s_i-1}-4-2(k-2)}{k-2}}{\frac{2k(k-1)^{s_i}-2k+c_kk(k-2)}{k-2}} =
\frac{2k(k-1)^{s_i-1}-2k}{2k(k-1)^{s_i}-2k+c_kk(k-2)} .
\end{eqnarray*} 
By Lemma~\ref{fraction12}, there exists a vertex $ u \in V(\Gamma_{k,g_i}) $ with the set $ S_u = 
	V({\mathcal T}_{k,s_i-1}^e) \cap S $ and $ |S_u| \geq \beta_{k,g_i} |S|. $ 
	Now, if we let $ \tilde{S_u} $ denote the set $ S - S_u $, then it follows from the proof of Theorem \ref{main1} that $ |\tilde{S_u}| \leq (1-\beta_{k,g_i}) |S| $. Consider the multipole $ \Gamma^M_{k,g_i}(S_u) $. Since 
	$ \Gamma_{k,g_i} $ is $k$-regular, the induced multipole $ \Gamma^M_{k,g_i}(S_u) $ is also $k$-regular. Furthermore, $ S_u $ is a subset of
	$ V({\mathcal T}_{k,s_i-1}^e) $, and therefore $ \Gamma^M_{k,g_i}(S_u) = ({\mathcal T}_{k,s_i}^e)^M(S_u) $, where $ {\mathcal T}_{k,s_i}^e $
	is a tree with root edge $e$. 
 It follows that $ \Gamma^M_{k,g_i}(S_u) $ contains no cycles, and hence the number of edges 
	with both end-vertices contained in $ \Gamma^M_{k,g_i}(S_u) $ is at most $ |V( \Gamma^M_{k,g_i}(S_u))| -1 = |S_u| -1 $, and all other (semi-)edges 
	of $ \Gamma^M_{k,g_i}(S_u) $ are in fact true semi-edges; each incident with exactly one vertex in $S_u$. Moreover, all semi-edges
	of $ \Gamma^M_{k,g_i}(S_u) $ stem from edges of $ \Gamma_{k,g_i} $ incident with a vertex in $S_u$ and $ V(\Gamma_{k,g_i})
	- S_u $; which also come in two kinds. First, there are edges of $ \Gamma_{k,g_i} $ incident to a vertex in $S_u$ and 
	$ \tilde{S_u} $. Recalling again that $ {\mathcal T}_{k,s_i}^e $ is a tree, it follows that no two distinct edges in $ \Gamma_{k,g_i} $ incident to  a vertex in $S_u$ and a vertex in $ \tilde{S_u} $ can be incident to the same vertex in $ \tilde{S_u} $ as this would cause the 
	existence of a cycle of length less than $ g_i=2s_i $. Thus, the number of edges of $ \Gamma_{k,g_i} $ incident with a vertex in $S_u$ and $ \tilde{S_u} $ is bounded from above by $ |\tilde{S_u}| \leq (1-\beta_{k,g_i}) |S| $. This yields a lower bound on the 
	number of edges of $ \Gamma_{k,g_i} $ of the second kind, i.e., edges incident to a vertex in $S_u$ and 
	$ V(\Gamma_{k,g_i}) - S $:  
	\[ k |S_u| - |S_u| +1 - (1-\beta_{k,g_i}) |S| = |S_u| (k-1) +1 - (1-\beta_{k,g_i}) |S|; \]
	the total number of edges and semi-edges in the multipole $ \Gamma_{k,g_i}(S_u) $ minus an upper bound on the number of edges
	in $ \Gamma_{k,g_i}(S_u) $ minus an upper bound on the number of edges between $S_u$ and $ \tilde{S_u} $. By observing that the number of edges of $ \Gamma_{k,g_i} $ incident to a vertex in $S_u$ and $ V(\Gamma_{k,g_i}) - S $ is a lower bound on the number $ |\sigma(S)| $ of edges incident to a vertex in $S$ and $ V(\Gamma_{k,g_i}) - S $, we obtained that for every 
	non-empty subset $S$ of $ V(\Gamma_{k,g_i}) $, 
	\begin{equation} \label{eq4}
		\begin{split}
			 |\sigma(S)| &\geq |S_u| (k -1) +1 - (1-\beta_{k,g_i}) |S| \\ &\geq 
			 \beta_{k,g_i} |S|(k-1) +1 - (1-\beta_{k,g_i}) |S| \\ &= |S| (\beta_{k,g_i}k - 1)+1 .
		\end{split}
	\end{equation}
	
	This yields a lower bound on the Cheeger constant of $ \Gamma_{k,g_i} $:
		\begin{equation*} \label{eq4}
		\begin{split}
	 \min_{0 < |S| \leq \frac{|V(\Gamma_{k,g_i})|}{2}} \{ \frac{|\sigma(S)|}{|S|} \} & \geq
	\min_{0 < |S| \leq \frac{|V(\Gamma_{k,g_i})|}{2}} \{ \beta_{k,g_i}k - 1 + \frac{1}{|S|} \} \\ & \geq 
	 \beta_{k,g_i}k - 1 + \frac{2}{|V(\Gamma_{k,g_i})|} \\ & \geq 
	\frac{2k(k-1)^{s_i-1}-2k}{2k(k-1)^{s_i}-2k+c_k k(k-2)}k-1+  \frac{2}{\frac{2(k-1)^{s_i}-2}{k-2} + c_k} \\ & = 
	 \frac{2k(k-1)^{s_i-1}-2k}{2(k-1)^{s_i}-2+c_k(k-2)}-1 + \frac{2(k-2)}{2(k-1)^{s_i}-2 + c_k(k-2)} \\ & = 
	\frac{2k(k-1)^{s_i-1}-2k-2(k-1)^{s_i} +2-c_k(k-2)+2k -4}{2(k-1)^{s_i}-2+c_k(k-2)} \\ & =
	  \frac{2k(k-1)^{s_i-1}-2+c_k(k-2)}{2(k-1)^{s_i}-2+c_k(k-2)} - \frac{2(k-1)^{s_i} + 2c_k(k-2)}{2(k-1)^{s_i}-2 + c_k(k-2)} . 
  \end{split}
\end{equation*}
	Since, 
\begin{eqnarray*}
	\lim_{s_i \to \infty} \left(  \frac{2k(k-1)^{s_i-1}-2+c_k(k-2)}{2(k-1)^{s_i}-2+c_k(k-2)} -  \frac{2(k-1)^{s_i} + 2c_k(k-2)}{2(k-1)^{s_i}-2 + c_k(k-2)}\right) = \\   \lim_{s_i \to \infty} \frac{2k(k-1)^{s_i} -2+ c_k(k-2)}{2(k-1)^{s_i}-2 + c_k(k-2)} - \lim_{s_i \to \infty} \frac{2(k-1)^{s_i} + 2c_k(k-2)}{2(k-1)^{s_i}-2 + c_k(k-2)} = \\ \frac{k}{k-1} - 1 = \frac{1}{k-1} , 
 \end{eqnarray*}
 
 the result follows.
\end{proof}

	\section{Concluding Remarks}
Combining the results of the previous two sections provides us with the main
theorem of this paper:
\begin{theorem}\label{main3}
Let  $ k \geq 3 $, and let  $c_k$ be a positive constant for which there exists an infinite increasing sequence of girths
$ \{ g_i \}_{i \in {\Bbb N}} $ and an infinite family of $(k,g_i)$-graphs $ \{ \Gamma_{k,g_i}\}_{i \in {\Bbb N}}$ of orders not exceeding $ M(k,g_i)+c_k $. Then, for 
every $ \epsilon \geq 0 $, there exists an infinite subsequence $ \{ g_{i_j} \}_{j \in {\Bbb N}} $
of the sequence $ \{ g_i \}_{i \in {\Bbb N}} $ such that $ \{ \Gamma_{k,g_{i_j}} \}_{j \in {\Bbb N}} $ is an expander family with the Cheeger constant greater than or equal to $ \frac{1}{k-1} - \epsilon $.
\end{theorem}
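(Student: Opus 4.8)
The plan is to reduce Theorem~\ref{main3} to the two results already established for girths of a fixed parity, namely Theorem~\ref{main1} (odd girth) and Theorem~\ref{main12} (even girth), by means of a pigeonhole argument on the parities of the girths appearing in the given sequence $\{g_i\}_{i \in {\Bbb N}}$.

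First I would observe that every term $g_i$ of the infinite increasing sequence is either odd or even. Since the sequence is infinite, the infinite pigeonhole principle guarantees that at least one of the two parities occurs infinitely often among the $g_i$. I would therefore split into two (non-exclusive) cases and, in each, extract the infinite subsequence of those indices $i$ for which $g_i$ has the chosen parity. Because $\{g_i\}$ is strictly increasing, this extracted subsequence is itself an infinite strictly increasing sequence of girths, all of the same parity, and the associated graphs $\Gamma_{k,g_i}$ retain the order bound $|V(\Gamma_{k,g_i})| \leq M(k,g_i) + c_k$ directly from the hypothesis.

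Next, in the case where infinitely many $g_i$ are odd, I would apply Theorem~\ref{main1} to the extracted odd-girth subsequence with the same constant $c_k$; this yields, for each $\epsilon \geq 0$, an index $N_\epsilon$ beyond which the graphs form an expander family whose Cheeger constant is at least $\frac{1}{k-1} - \epsilon$. The tail of the subsequence starting at $N_\epsilon$ remains an infinite subsequence of $\{g_i\}$, which furnishes exactly the subsequence $\{g_{i_j}\}_{j \in {\Bbb N}}$ required by the statement. In the case where infinitely many $g_i$ are even, the identical argument applies verbatim with Theorem~\ref{main12} in place of Theorem~\ref{main1}; since both theorems deliver the same Cheeger bound $\frac{1}{k-1} - \epsilon$, the conclusion is uniform across the two cases.

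The argument is essentially bookkeeping, so I do not anticipate a genuine obstacle; the one point that merits care is verifying that passing to a subsequence of fixed parity preserves every hypothesis of the parity-specific theorems -- that the subsequence stays infinite and increasing, and that the order bound with constant $c_k$ is inherited term by term -- after which the two earlier theorems close the proof immediately. To avoid stating the case split twice, I would phrase the write-up by simply fixing whichever parity occurs infinitely often (selecting, say, the odd one whenever both do), so that a single application of the corresponding theorem suffices.
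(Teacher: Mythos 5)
Your proposal is correct and matches the paper's own proof: the authors likewise observe that an infinite increasing sequence of girths must contain an infinite subsequence of one parity and then invoke Theorem~\ref{main1} or Theorem~\ref{main12} accordingly. Your added care about the tail beyond $N_\epsilon$ and the inheritance of the order bound is sound bookkeeping that the paper leaves implicit.
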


\begin{proof}
An infinite increasing sequence of girths $ \{ g_i \}_{i \in {\Bbb N}} $ necessarily 
contains an infinite subsequence of odd or an infinite subsequence of even girths.
Applying the appropriate theorem of Theorems~\ref{main1} and \ref{main12} yields
the desired results.
\end{proof}

Interestingly, a careful analysis of the proofs of Theorems~\ref{main1} and \ref{main12} suggests the possibility of  further strengthening the 
results in a way that would assure the existence of an expander subfamily of 
$k$-regular graphs even in a sequence of $(k,g_i)$-graphs whose orders
do not exceed $ M(k,g_i)+f(g_i) $ for a sufficiently slowly increasing 
function $f$. Since we wanted to keep our investigation in line with the
original question of Bermond and Bollob\'as, we did not pursue this 
direction. Instead, 
before concluding our article, we wish to point out that it is not even known whether 
there exists a $ k \geq 3 $ and a corresponding constant $C_k$ such that there exist infinitely 
many $g_i \geq 3$ with the property that there exists a $(k,g_i)$-graph of order not exceeding $C_kM(k,g_i)$; a constant {\em multiple} of the Moore bound. In view of this, 
one might be tempted to ask whether a similar result to that of Theorem~\ref{main3}
might exist for constant multiples of $ M(k,g) $, i.e., whether any family of $(k,g_i)$-graphs whose orders do not exceed $C_kM(k,g_i)$, for any given constant $C_k$, must contain a subfamily of expanders. 

The answer to such question stated in this most general form is negative. It is 
not true that 
{\em any} family of $(k,g_i)$-graphs whose orders do not exceed $C_kM(k,g_i)$, for {\em any} given constant $C_k$, necessarily contains a family of expanders. The argument for this claim goes as follows.

Let $k \geq 3 $, and suppose the existence of an infinite family of $(k,g_i)$-graphs $\Gamma_{k,g_i}$, $ i \in {\Bbb N} $, of orders not exceeding $ \alpha_k M(k,g_i) $ for some fixed $ \alpha_k > 1 $ 
(if no such family and no such $\alpha_k$ exist, the above question is obviously moot). 
Let $\overline{\Gamma}_{k,g_i}$ be a family of graphs constructed from the graphs 
$\Gamma_{k,g_i}$ by taking two disjoint copies 
$\Gamma_{k,g_i}^1$ and $\Gamma_{k,g_i}^2$ of $\Gamma_{k,g_i}$, selecting the same edge $\{ u^1,v^1 \}$ and $\{ u^2,v^2 \}$ in each of the 
copies, removing the edges $\{ u^1,v^1 \}$ and $\{ u^2,v^2 \}$ and replacing them
with edges $\{ u^1,v^2 \}$ and $\{ u^2,v^1 \}$. It is not hard to see that the 
resulting family
$\overline{\Gamma}_{k,g_i}$, $ i \in {\Bbb N} $, is a family of $ (k,g_i) $-graphs 
of orders not exceeding $ 2 \alpha_k M(k,g_i) $. Moreover, it is also not hard to
see that this new family does not contain a subfamily
of expanders: The number of edges between the two complementary subsets of vertices 
belonging to $ \Gamma_{k,g_i}^1$ and $\Gamma_{k,g_i}^2$ of $\overline{\Gamma}_{k,g_i}$
is always $2$ regardless of the order of $\Gamma_{k,g_i} $. Thus, even if the 
family $\Gamma_{k,g_i}$, $ i \in {\Bbb N} $, contained a family of expanders, the 
family $\overline{\Gamma}_{k,g_i}$, $ i \in {\Bbb N} $, would not.

Since the above `trick' of connecting two copies of a vertex-transitive graph does not
necessarily produce a vertex-transitive graph, it still might be the case
that any infinite family of {\em vertex-transitive} $(k,g_i)$-graphs $ \Gamma_{k,g_i} $ of 
orders not exceeding $ \alpha_k M(k,g_i) $, for any constant $ \alpha_k > 1 $, necessarily contains a subfamily of expanders. However, a similarly simple trick preserving the vertex-transitivity of the 
constructed graphs (possibly creating graphs of orders which are a constant 
multiple of the orders of the original graphs where the constant is larger 
than $2$) may also resolve this question in negative. Therefore, we leave this
question for further investigation.

Finally, in view of the results obtained in \cite{Filipovski} where the authors have shown that 
the existence of an infinite family of graphs of fixed degree and increasing diameters of orders 
differing from the Moore bound by at most a constant would necessarily lead to a family of 
Ramanujan graphs, one should ask whether it might be possible to prove a similar result in case
of cages. If such a result were possible, it would probably have to be proven by radically different
techniques. Invoking the Cheeger inequality (\ref{cheeger}) and using the Cheeger constant 
determined in Theorem~\ref{main3}, one only obtaines the inequalities
\[ \frac{k-\lambda(\Gamma_{k,g_i})}{2} \leq \frac{1}{k} \leq \sqrt{2k(k-\lambda(\Gamma_{k,g_i}))} \]
implying the inequalities 
\[ k - \frac{2}{k} \leq \lambda(\Gamma_{k,g_i}) \leq k - \frac{1}{2k^3} , \] 
which is quite far from being able to prove that $  \lambda(\Gamma_{k,g_i})  \leq 2 \sqrt{k-1} $.
The only way to remedy this approach would depend on finding a better (larger) Cheeger constant
for the considered families, which we were not able to do, and which might not even exist.

\section{Acknowledgments} \vskip -2mm
Both authors acknowledge the support from VEGA 1/0437/23 and the second author is also supported by APVV-19-0308.

\section{Declaration of competing interest}
 The authors have no financial or personal relationships that could influence this work. 
 
 \section{Data availability}
 
 No data was used for the research described in the article.


\end{document}